\documentclass{article}
\usepackage{arxiv}
\usepackage[utf8]{inputenc} 
\usepackage[T1]{fontenc}    
\usepackage{hyperref}       
\usepackage{url}            
\usepackage{booktabs}       
\usepackage{amsfonts}       
\usepackage{nicefrac}       
\usepackage{lipsum}
\usepackage[english]{babel}

\usepackage{amsfonts}
\usepackage{graphics}
\usepackage{dsfont}
\usepackage{bm,bbm}
\usepackage{amsmath,amssymb,amsthm}
\usepackage{mathtools}
\usepackage{algorithm, algorithmic}
\usepackage{pifont}
\usepackage{cases}
\usepackage{subcaption,graphicx}
\usepackage{stackengine}    
\usepackage{algorithm}
\usepackage{algorithmic}

\newtheorem{theorem}{Theorem}[section]

\renewcommand{\le}{\leqslant}
\renewcommand{\ge}{\geqslant}

\DeclareMathOperator*{\argmin}{arg\,min}
\DeclareMathOperator*{\argmax}{arg\,max}

\newcommand*{\tbar}{\bar{t}}
\newcommand*{\fbar}{\bar{f}}

\newcommand*{\R}{\mathbb{R}}

\begin{document}

\title{An evolutionary view on equilibrium models of transport flows
}

\author{
    Evgenia Gasnikova \\
    Moscow Institute of Physics and Technology \\
    Dolgoprudny, Russia \\
    \texttt{egasnikov@yandex.ru} \\
    \And
    Alexander Gasnikov \\
    Moscow Institute of Physics and Technology \\
    Dolgoprudny, Russia \\
    \texttt{gasnikov@yandex.ru}
    \And
    Yaroslav Kholodov \\
    Innopolis University \\
    Innopolis, Russia \\
    \texttt{ya.kholodov@innopolis.ru}
    \And
    Anastasiya Zukhba \\
    Moscow Institute of Physics and Technology \\
    Dolgoprudny, Russia \\
    \texttt{azukhba@yandex.ru}
}

 \renewcommand{\today}

\maketitle


\begin{abstract}
	In this short paper we describe natural logit population games dynamics that explain equilibrium models of origin-destination matrix estimation and (stochastic) traffic assignment models (Beckmann, Nesterov--de Palma). Composition of the proposed dynamics allows to explain two-stages traffic assignment models.
	\keywords{Beckmann model; origin-destination matrix estimation; logit-dynamic; maximum entropy principle; Hoeffding's inequality in Hilbert space; Cheeger's inequality; concentration of measure phenomenon}
\end{abstract}

\section{Introduction}
The first traffic assignment model was proposed for about 70 years ago in the work of M.~Beckmann \cite{beckmann1952continuous}, see also \cite{beckmann1956studies}. Nowadays Beckmann's type models are rather well studied \cite{patriksson2015traffic,sheffi1985urban,nesterov2003stationary,baimurzina2019universal,gasnikov2020traffic}. The entropy based origin-destination matrix models are also well developed nowadays \cite{wilson2013entropy,gasnikov2013vvedenie,gasnikov2020traffic}. Moreover, as it was mentioned in \cite{gasnikov2016evolutionary} both of these two types of models can be considered as macrosystem equilibrium for logit (best-response) dynamics in corresponding congestion games \cite{sandholm2010population}. 

In this paper we popularise the results of \cite{gasnikov2016evolutionary} for english-reading people\footnote{The paper \cite{gasnikov2016evolutionary} was written in Russian and have not been translated yet.} and refine the results on the convergence rate. Moreover, we propose superposition of the considered dynamics to describe equilibrium in two-stage traffic assignment model \cite{de2011modelling,gasnikov2014three}.

One of the main results of the paper is Theorem~\ref{Th:main}, where it is proved that the natural logit-choice and best-response markovian population dynamics in traffic assignment model (congested population game) converge to equilibrium. By using Cheeger's inequality we first time show that mixing time (the time required to reach equilibrium) of these dynamics $T_{mix}$ is proportional to $\ln N$, where $N$ is a total number of agents. Note, that in related works analogues of this theorem were proved without estimating of $T_{mix}$ \cite{sandholm2010population,gasnikov2013vvedenie,gasnikov2014three}. We confirm Theorem~\ref{Th:main} by numerical experiments.

Another important result is a saddle-point reformulation of two-stages traffic assignment model. We explain how to apply results of Theorem~\ref{Th:main} to this model.

\section{Traffic assignment. Problem statement}
\label{sec:beckmann}
Following \cite{kubentayeva2021finding} we describe the problem statement (the next two standard subsections are mainly taken from \cite{kubentayeva2021finding}, starting from the description of <<Population games dynamics ...>> the narration is original). 

Let the urban road network be represented by a directed graph $G = ( V, E )$, where vertices $V$ correspond to intersections or centroids \cite{sheffi1985urban} and edges $E$ correspond to roads, respectively.
Suppose we are given the travel demands: namely, let $d_w$ (veh/h) be a trip rate for an origin-destination pair $w$ from the set $OD \subseteq \{ w = (i, j) : i \in O, \; j \in D \}$. Here $O \subseteq V$ is the set of all possible origins of trips, and $D \subseteq V$ is the set of destination nodes.
For OD pair $w = (i, j)$ denote by $P_w$ the set of all simple paths from $i$ to $j$. Respectively, $P = \bigcup_{w \in OD} P_w$ is the set of all possible routes for all OD pairs. 
Agents travelling from node $i$ to node $j$ are distributed among paths from $P_w$, i.e.\ for any $p \in P_w$ there is a flow $x_p \in \R_+$ along the path $p$, and $\sum_{p \in P_w} x_p = d_w$.
Flows from vertices from the set $O$ to vertices from the set $D$ create the traffic in the entire network $G$, which can be represented by an element of
\[
X = X(d) = \Bigl\{x \in \R_{+}^{|P|} : \; \sum_{p \in P_w} x_p = d_w, \; w \in OD \Bigr\}.
\]
Note that the dimension of $X$ can be extremely large: e.g.\ for $n \times n$ Manhattan network $\log |P| = \Omega(n)$.
To describe a state of the network we do not need to know an entire vector $x$, but only flows on arcs:
\[
f_e(x) = \sum_{p \in P} \delta_{e p} x_p \quad \text{for} \quad e \in E,
\]
where $\delta_{e p} = \mathbbm{1}\{e \in p\}$. Let us introduce a matrix $\Theta$ such that $\Theta_{e, p} = \delta_{e p}$ for $e \in E$, $p \in P$, so in vector notation we have $f = \Theta x$. To describe an equilibrium we use both path- and link-based notations $(x, t)$ or $(f, t)$. 

\textbf{Beckmann model.} One of the key ideas behind the Beckmann model is that the cost (e.g.\ travel time, gas expenses, etc.) of passing a link $e$ is the same for all agents and depends solely on the flow $f_e$ along it. In what follows, we denote this cost for a given flow $f_e$ by $t_e = \tau_e(f_e)$.
Another essential point is a behavioral assumption on agents called the first Wardrop's principle: we suppose that each of them knows the state of the whole network and chooses a path $p$ minimizing the total cost
\[
T_p(t) = \sum_{e \in p} t_e.
\]

The cost functions are supposed to be continuous, non-decreasing, and non-negative. Then $(x^*, t^*)$, where $t^* = (t_e^*)_{e \in E}$, is an equilibrium state, i.e.\ it satisfies conditions
\begin{gather*}
    t_e^* = \tau_e(f_e^*), \quad\text{where}\quad f^* = \Theta x^*, \\ 
    x^*_{p_w} > 0 \Longrightarrow T_{p_w}(t^*) = T_w(t^*) = \min_{p \in P_w} T_p(t^*),
\end{gather*}
if and only if $x^*$ is a minimum of the potential function:
\begin{align*}
    \Psi\left(f(x)\right) = \sum_{e \in E} \underbrace{\int_{0}^{f_e} \tau_e (z) d z}_{\sigma_e(f_e)} 
    \longrightarrow \min_{f = \Theta x, \; x \in X} \\
    \Longleftrightarrow \Psi(f) = \sum_{e \in E} \sigma_e (f_e) 
    \longrightarrow \min_{f = \Theta x : \; x \in X}, \tag{B}\label{PrimalBeckmann}
\end{align*}
and $t_e^* = \tau_e(f_e^*)$ \cite{beckmann1956studies}.

Another way to find an equilibrium numerically is by solving a dual problem.
According to Theorem~4 from \cite{nesterov2003stationary,gasnikov2014three}, we can construct it in the following way:
   $$ \min_{f=\Theta x : \; x \in X} \Psi(f)  $$  $$ = \min_{x\in X, \; f} \left [ \Psi(f) + \sup_{t \in \R^{|E|}} \langle t, \Theta x - f \rangle \right ] = \sup_{t \in \R^{|E|}} \min_{x \in X, \; f} \left [ \Psi(f) + \langle t, \Theta x - f \rangle \right ] $$  
     $$ = \sup_{t \in \R^{|E|}} \left [ - \sum_{e \in E} \max_{f_e} \{t_e f_e - \sigma_e(f_e) \} + \min_{x \in X} \sum_{p\in P} \sum_{e \in E} t_e \delta_{ep} x_p \right ]$$  
     $$ = \max_{t \in \text{dom} \sigma^*} - \left [ \sum_{e \in E} \sigma_e^*(t_e) - \sum_{w \in OD} d_w T_w(t) \right ],  $$ 
where 
\[
\sigma_e^*(t_e) = \sup_{f_e \ge 0} \{t_e f_e - \sigma_e(f_e) \} 
= \fbar_e \left( \frac{t_e - \tbar_e}{\tbar_e \rho} \right)^{\mu} \frac{\left(t_e - \bar{t}_e \right)}{1 + \mu}
\] 
is the conjugate function of $\sigma_e(f_e)$, $e \in E$.
Finally, we obtain the dual problem, which solution is $t^*$:
\begin{equation}\label{DualBeckmann}
   \max_{t \ge \bar{t}}\left\{\sum_{w \in OD} d_w T_w(t) - \sum_{e \in E} \sigma_e^*(t_e)\right\}. \tag{DualB}
\end{equation}

When we search for the solution to this problem  numerically, on every step of an applied method we can reconstruct primal variable $f$ from the current dual variable $t$: $f \in \partial \sum_{w \in OD} d_w T_w(t)$. 
This condition reflects the fact that every driver choose the shortest route. Another condition $t_e = \tau_e(f_e)$ can be equivalently rewrite as $f_e =\frac{d}{dt_e} \sigma_e^*(t_e)$.
This condition with the condition $f \in \partial \sum_{w \in OD} d_w T_w(t)$ form the optimization problem \eqref{DualBeckmann}.

If $\mu\to 0+$ Beckmann's model will turn into Nesterov--dePalma model \cite{gasnikov2014three,kotlyarova2022proof}.

\textbf{Population games dynamic for (stochastic) Beckmann model.} 
 Let us consider each driver to be an agent in population game, where $P_w$, $w \in OD$ is a set of types of agents. All agent (drivers) of type $P_w$ can choose one of the strategy $p\in P_w$ with cost function $T_p\left(t\left(f\left(x\right)\right)\right):=\tilde{T}_p(x)$. Assume that every driver / agent independently of anything (in particular of any other drivers)  is considering the opportunity to reconsider his choice of route / strategy $p$ in time interval $[\text{t},~\text{t}+\Delta \text{t})$ with probability $\lambda \Delta \text{t} + o(\Delta \text{t})$, where $\lambda > 0$ is the same for all drivers / agents. It means that with each driver we relate its own Poisson process with parameter $\lambda$.
If in moment of time $\text{t}$ (when the flow distribution vector is $x(\text{t})$) the the driver of type $P_w$ decides to reconsider his route, than he choose the route $q \in P_w$ with probability
\begin{equation}\label{eq:gumbel}
p_q\left(\tilde{T}\left(x\left(\text{t}\right)\right)\right) = \mathds{P}\left(q = \argmax_{p\in P_w; j=1,...,J}\left\{-\tilde{T}_p\left(x\left(\text{t}\right)\right) + \xi_{p,j}\right\}\right),
\end{equation}
where $\xi_{p,j}$ are i.i.d. and satisfy Gumbel $\max$ convergence theorem \cite{leadbetter1983asymptotic} when $J\to \infty$ with the parameter $\gamma$ (e.g. $\xi_{p,j}$ has (sub)exponential tails at $\infty$). It means that  $\xi_p = \max_{j=1,..,J} \xi_{p,j}$ asymptotically (when $J\to\infty$) has Gumbel distribution $\mathds{P}\left(\xi_p < \xi\right) = \exp\left(-\exp\left(-\xi/\gamma - E\right)\right)$, where $E\simeq 0.5772$ is Euler constant. Note that $\mathds{E} \xi_p = 0$, $\text{Var}~\xi_p  = \pi^2\gamma^2/6$. In words \eqref{eq:gumbel} means that every driver try to choose the best route. But the only available information are noise corrupted values $\tilde{T}_p$. So the driver try to choose the best route focused on the worst forecasts for each route. 

One of the main results of Discrete Choice Theory is as follows \cite{anderson1992discrete}  
\begin{equation}\label{eq:logit}
p_q\left(\tilde{T}\left(x\left(\text{t}\right)\right)\right) = \frac{\exp\left(-\tilde{T}_p\left(x\left(\text{t}\right)\right)/\gamma\right)}{\sum_{q\in P_w} \exp\left(-\tilde{T}_q\left(x\left(\text{t}\right)\right)/\gamma\right)},
\end{equation}
where $p_q\left(\tilde{T}\left(x\left(\text{t}\right)\right)\right)$ was previously defined in \eqref{eq:gumbel}.

Note that the described above dynamic degenerates into the best-response dynamic when $\gamma \to 0 +$ \cite{sandholm2010population}.

\begin{theorem}\label{Th:main} Let $\sum_{p \in P} x_p = N$. For all $x(0) \in X$ there exists such a constant $c\left(x(0)\right)$ that for all $\sigma \in \left(0,~0.5\right)$ and $\text{t} \ge T_{mix}= c\left(x(0)\right)\lambda^{-1} \ln N$:
\begin{equation}\label{eq:hoeffding}
    \mathds{P}\left(\left\|\frac{x(\text{t})}{N} - x^{*} \right\|_2\le \frac{2\sqrt{2}+ 4\sqrt{\ln\left(\sigma^{-1}\right)}}{\sqrt{N}}\right) \ge 1 - \sigma,
\end{equation}
    where 
\begin{equation}\label{eq:equil}
    x^* = \argmin_{x \in X\left(d/N\right)}\left\{\tilde{\Psi}\left(f(x)\right) + \gamma\sum_{p\in P} x_p\ln x_p\right\},
\end{equation}    
    \begin{center}
        $\tilde{\Psi}\left(f(x)\right) = \sum_{e \in E} \int_{0}^{f_e} \tilde{\tau}_e (z) d z,$\quad $\tilde{\tau}_e(z) = \tau_e(zN).$
    \end{center}
\end{theorem}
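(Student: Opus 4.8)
I would treat $(x(\mathrm t))_{\mathrm t\ge 0}$ as a finite, irreducible continuous-time Markov chain on the lattice polytope $X_N=\{x\in\Z_+^{|P|}:\ \sum_{p\in P_w}x_p=d_w,\ w\in OD\}$ with $\sum_{w}d_w=N$, identify its stationary law $\pi_N$, show that $\pi_N$ concentrates on a $1/\sqrt N$-ball around $x^*$, and then estimate how long the macroscopic observable $x(\mathrm t)/N$ needs to enter this stationary regime. For the first step one uses that the congestion game with costs $\tilde T_p(x)=T_p(t(f(x)))$ is a potential game: since each $\tilde\tau_e$ is non-decreasing and $f=\Theta x$ is linear, $\tilde T_p(x)=\partial_{z_p}\tilde\Psi(f(z))\big|_{z=x/N}$. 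Hence, as for any logit revision protocol in a potential game \cite{sandholm2010population,gasnikov2016evolutionary}, the chain is reversible with
\[
\pi_N(x)\ \propto\ \Big(\textstyle\prod_{w\in OD}\binom{d_w}{(x_p)_{p\in P_w}}\Big)\exp\!\Big(-\tfrac{N}{\gamma}\,\tilde\Psi\big(f(x/N)\big)\Big)
\]
up to an $o(N)$ correction in the exponent from replacing the exact discrete potential by $\tilde\Psi\circ f$. Stirling's formula turns the multinomial coefficients into $\exp(-N\sum_p z_p\ln z_p+o(N))$, so $\pi_N(x)\propto\exp(-NG(x/N)+o(N))$ with $G(z)=\gamma^{-1}\tilde\Psi(f(z))+\sum_{p\in P}z_p\ln z_p$, i.e.\ $G$ is $\gamma^{-1}$ times the objective of \eqref{eq:equil} (this is the Gibbs / maximum-entropy mechanism selecting $x^*$). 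Since $\tilde\Psi\circ f$ is convex and $\sum_p z_p\ln z_p$ is strongly convex on $X(d/N)$ with a constant independent of $N$, $G$ is strongly convex, $x^*$ is its unique minimiser, and $\pi_N$ is $\Theta(N)$-strongly log-concave.

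The second step is the concentration of $\pi_N$ around $x^*$, which is where Hoeffding's inequality in Hilbert space enters. Split $\|x/N-x^*\|_2\le\|x/N-\mathbb E_{\pi_N}[x/N]\|_2+\|\mathbb E_{\pi_N}[x/N]-x^*\|_2$. The bias term is $O(1/\sqrt N)$ by strong convexity of $G$ (the mean of a $\Theta(N)$-strongly log-concave measure lies within $O(1/\sqrt N)$ of its mode). For the fluctuation term one needs a \emph{dimension-free} bound: writing $x=\sum_{i=1}^N e_{p(i)}$ and using that $\pi_N$ behaves, up to the strong-log-concavity corrections, as if the route indicators $e_{p(i)}$ were i.i.d.\ draws from the logit distribution at $x^*$, the Brascamp--Lieb inequality (for a continuous interpolation of $\pi_N$) bounds $\mathrm{tr}\,\mathrm{Cov}_{\pi_N}(x/N)$ by $N^{-1}\sum_p\mathbb E_{\pi_N}[z_p]=N^{-1}$, the identity $\sum_p z_p=1$ being exactly what cancels the dimension $|P|$. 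Since a single agent's switch moves $x/N$ by at most $\sqrt2/N$ in $\ell_2$, Hoeffding's inequality in Hilbert space (Pinelis' martingale form, revealing the agents' strategies one at a time) gives $\mathbb P_{\pi_N}\big(\|x/N-\mathbb E_{\pi_N}[x/N]\|_2\ge\mathbb E_{\pi_N}\|x/N-\mathbb E_{\pi_N}[x/N]\|_2+u\big)\le\exp(-Nu^2/4)$; choosing $u=2\sqrt{\ln(\sigma^{-1})/N}$ and collecting the three estimates reproduces the right-hand side of \eqref{eq:hoeffding} under $\pi_N$, with $2\sqrt2$ and $4$ absorbing the bias term and the Hilbert-space Hoeffding constant.

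Finally one passes from the stationary estimate to the statement for $\mathrm t\ge T_{mix}$. I would compare $x(\mathrm t)/N$ with the deterministic mean-field trajectory $\dot z=\lambda\big(p(\tilde T(z))-z\big)$, $p(\cdot)$ the logit map \eqref{eq:logit}: because $G$ is strongly convex this flow is a linear contraction, $\|z(\mathrm t)-x^*\|_2\le e^{-c\lambda\mathrm t}\|z(0)-x^*\|_2$, so $z(\mathrm t)$ enters the $1/\sqrt N$-neighbourhood of $x^*$ after time $O\big(\lambda^{-1}\ln(\sqrt N\,\|z(0)-x^*\|_2)\big)=c(x(0))\,\lambda^{-1}\ln N$ — the origin of both the $\ln N$ factor and the dependence on $x(0)$. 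The fluctuation $x(\mathrm t)/N-z(\mathrm t)$ is driven by a pure-jump martingale with jumps $\le\sqrt2/N$ and total rate $\le\lambda N$; the contraction means only an $O(1/\lambda)$-long window of increments is effectively remembered, so a Hilbert-space Azuma/Hoeffding bound keeps $\sup_{\mathrm t\ge T_{mix}}\|x(\mathrm t)/N-z(\mathrm t)\|_2=O\big(\sqrt{\ln(\sigma^{-1})/N}\big)$ with probability $\ge1-\sigma$. Equivalently — and this is where Cheeger's inequality is used — one lower-bounds the conductance, hence the spectral gap, of $\pi_N$ on $X_N$ uniformly in $N$ (using strong log-concavity on a polytope), which forces correlations along the chain to decay on the $\lambda^{-1}$ scale and makes the law of $x(\mathrm t)/N$ at time $c(x(0))\lambda^{-1}\ln N$ as concentrated as $\pi_N$. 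Combining the two steps yields \eqref{eq:hoeffding}.

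I expect two points to be the main obstacles. First, the dimension-free $1/\sqrt N$ rate: a naive union bound over the $(N+1)^{|P|}$ lattice states loses a $\sqrt{|P|\ln N}$ factor, so one must genuinely exploit the log-concavity of $\pi_N$ (Brascamp--Lieb together with $\sum_p z_p=1$, or near-independence of the agents) and a Hilbert-space rather than scalar concentration inequality, and control the discrete-to-continuous passage. Second, showing that $T_{mix}$ really scales like $\ln N$ and not like $N$: since the worst-case $\ln(1/\min_x\pi_N(x))$ is of order $N$, the $\ln N$ rate is a statement about the \emph{macroscopic} variable $x(\mathrm t)/N$, and making it rigorous requires a Cheeger-type bound keeping the spectral gap bounded below as $N\to\infty$ together with the contraction argument above. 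The remaining ingredients (potential-game reversibility, Stirling, strong convexity of the entropy) are standard.
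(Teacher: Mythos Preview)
Your stationary-measure identification, the Stirling/Sanov step locating $x^*$, and the Hoeffding-in-Hilbert-space concentration all match the paper (the paper applies Hoeffding directly to the multinomial factor without your Brascamp--Lieb refinement, simply noting that the Gibbs factor $\exp(-\Psi/\gamma)$ only sharpens the concentration). Where you genuinely diverge is the mixing-time step. The paper's argument is purely Cheeger: pass to a discrete-time chain with step $(\lambda N)^{-1}$, argue that the isoperimetric optimum $S$ is the $\sqrt N$-ball around $x^*N$ so that the conductance is $h(G)\sim N^{-1/2}$ (the sphere-to-ball volume ratio), and read off $T_{mix}\sim(\lambda N)^{-1}h(G)^{-2}\bigl(\ln\pi(s)^{-1}+\ln\varepsilon^{-1}\bigr)\sim\lambda^{-1}\ln N$, absorbing $\ln\pi(s)^{-1}$ into $c(x(0))$. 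Your primary route instead couples $x(\mathrm t)/N$ to the Kurtz mean-field ODE, uses strong convexity of $G$ to get exponential contraction of the deterministic flow, and handles the stochastic fluctuations by a martingale Azuma bound; Cheeger appears in your write-up only as an equivalent alternative. Your approach makes transparent why the $\lambda^{-1}\ln N$ scale pertains to the macroscopic observable $x/N$ rather than to full TV mixing --- you correctly flag that $\ln(1/\min_x\pi_N(x))$ is of order $N$, a point the paper resolves only by hiding $\ln\pi(s)^{-1}$ inside $c(x(0))$ --- and it makes the dependence on $x(0)$ explicit; the paper's Cheeger calculation is shorter but rests on a heuristic isoperimetric claim about the optimal $S$.
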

\begin{proof}
    The first important observation is that the described Markov process is reversible. That is it satisfies Kolmogorov's detailed balance condition (see also \cite{malyshev2008reversibility}) with stationary (invariant) measure
    $$\pi(x) = \frac{N!}{x_1!...x_{|P|}!}\exp\left(-\frac{\Psi\left(f(x)\right)}{\gamma}\right),$$
    where $x \in X(d)$ \cite{sandholm2010population}. The result of type \eqref{eq:hoeffding} for $x(\infty)$ holds true due to Hoeffding's inequality in a Hilbert space \cite{boucheron2013concentration}. We can apply this inequality for multinomial part $\frac{N!}{x_1!...x_{|P|}!}$. The rest part may only strength the concentration phenomenon, especially when $\gamma$ is small. The Sanov's theorem \cite{thomas2006elements} says that $x^*$ from \eqref{eq:equil} asymptotically ($N \to \infty$) describe the proportions in maximum probability state, that is 
    $$x^*N\simeq\argmax_{x\in X(d)}\frac{N!}{x_1!...x_{|P|}!}\exp\left(-\frac{\Psi\left(f(x)\right)}{\gamma}\right).$$
    
    To estimate the mixing time $\sim \lambda^{-1} \ln N$ of the considered Markov process we will put it in accordance with this continuous-time process discrete-time process with step $\left(\lambda N\right)^{-1}$, which corresponds to the expectation time between two nearest events in continuous-time dynamic. Also we consider this discrete Markov chain as a random walk on a proper graph $G = \langle V_G,~E_{G}\rangle$ with starting point corresponds to the vertex $s$ and transition probability matrix $P = \| p_{ij}\|_{i,j=1}^{|V_G|}$. According to a Cheeger's inequality mixing time $\text{t}_{\text{mix}}$ for such a random walk, which approximate stationary measure $\pi$ with accuracy $\varepsilon = O\left(N^{-1/2}\right)$ (in this case  $x\left(\text{t}_{\text{mix}}\right) \simeq x(\infty)$), is 
    $$O\left(\left(\lambda N\right)^{-1}h(G)^{-2}\left(\ln\left(\pi(s)\right) + \ln(\varepsilon^{-1})\right)\right),$$
    where Cheeger's constant is determined as
    $$h(G) = \min_{S \subseteq V_G: \pi(S)\le 1/2}  \mathds{P}\left(S\to\bar{S}\mid S\right) = \min_{S \subseteq V_G: \pi(S)\le 1/2} \frac{\sum_{(i,j)\in E_G,~i\in S,~j\in\bar{S}}\pi(i)p_{ij}}{\sum_{i\in S} \pi(i)},$$
    where $\bar{S} = V_G/S$ \cite{levin2017markov}.
    Since $G$ and $P$ correspond to reversible Markov chain with stationary measure $\pi$ that exponentially concentrate around $x^*N$ one can prove that isoperimetric  problem of finding optimal set of vertexes $S$ has the following solution, which we described below roughly -- up to a numerical constant: $S$ is a set of such states $x \in X(d)$ that $\|x - x^*N\|_2 \lesssim O\left(\sqrt{N}\right)$. Since the the ratio of sphere volume of radius $O(\sqrt{N})$ to the volume of the ball of the same radius is $O(N^{-1/2})$, we can obtain that $h(G)\sim N^{-1/2}$. So up to a $\ln\left(\pi(s)\right)$ (we put it into $c\left(x(0)\right)$) mixing time is indeed $\sim \lambda^{-1} \ln N$. 
\end{proof}

Note that the describe above approach assumes that we first $\text{t}\to \infty$ and then $N\to \infty$. If we firstly take $N\to \infty$ than due to Kurtz's theorem \cite{ethier2009markov} $c(\text{t})=\lim_{N\to \infty} x(\text{t})/N$ satisfies (for all $w \in OD$, $p \in P_w$)
$$\frac{dc_p}{dt} = \bar{d}_w\frac{\exp\left(-\bar{T}_p\left(c\left(\text{t}\right)\right)\right)}{\sum_{q\in P_w} \exp\left(-\bar{T}_q\left(c\left(\text{t}\right)\right)\right)} - c_p(\text{t}),$$
where $\bar{d} =\lim_{N\to \infty} d/N$, $\bar{T}_p\left(c(\text{t})\right) = \tilde{T}_p\left(x(\text{t})\right)$. Note that Sanov's type function $\tilde{\Psi}\left(f(c)\right) + \gamma\sum_{p\in P} c_p\ln c_p$ from \eqref{eq:equil} will be Boltzmann--Lyapunov type function for this system of ordinary differential equations (SODE), that is decrees along the trajectory of SODE. This result is a particular case of general phenomenon: Sanov's type function for invariant measure obtained from Markovian dynamics is Boltzmann--Lyapunov type function for deterministic Kurtz's kinetic dynamics \cite{batishcheva20052,malyshev2008reversibility,gasnikov2013entropy}.

\section{Origin-destination matrix estimation}
\label{sec:OD}
Origin-destination matrix estimation model can be considered as a particular case of the traffic assignment model. The following interpretation goes back to \cite{gasnikov2014three,gasnikov2016evolutionary}. Indeed, let us consider fictive origin $o$ and fictive destination $d$. So $\tilde{O} = \{o\}$, $\tilde{D} = \{d\}$. Let us draw fictive edges from $o$ to real origins of trips $O$. The cost of the trip at edge $(o,i)$ is $\lambda^O_i$ -- an average price that each agent pays to live at this origin region $i\in O$. Analogously, let us draw edges from the vertexes of the real destination set $D$ to $d$. The cost of the trip at edge $(j,d)$ is $-\lambda^D_j$ -- minus average salary that each agent obtain in destination region $j\in D$. So the set of all possible routes (trips) from $o$ to $d$ can be described by pairs $(i,j)\in OD$. Each route consist of three edges $o \to i$ with cost $\lambda^O_i$, edge $i \to j$ with cost $T_{ij}$ (is available as an input of the model) and edge $j \to d$ with cost $-\lambda^D_j$. So equilibrium origin-destination matrix $d = \{d_{ij}\}_{(i,j)\in OD}$ (up to a scaling factor) can be find from entropy-linear programming problem
\begin{equation}\label{eq:gasnikova}
\min_{d\ge 0:~\sum_{(i,j)\in OD} d_{ij}=1}  \sum_{i\in O} \lambda^O_i \sum_{j \in D} d_{ij} -   \sum_{j\in D} \lambda^D_j \sum_{i \in O} d_{ij} + \sum_{(i,j)\in OD} T_{ij}d_{ij} + \gamma\sum_{(i,j)\in OD} d_{ij}\ln d_{ij}.
  \end{equation}
In real life $\lambda^O_i$ and $\lambda^D_j$ are typically unknown. But at the same time the following agglomeration characters are available
\begin{equation}\label{eq:O}
    \sum_{j\in D} d_{ij} = L_i,\quad i \in O,
\end{equation}
\begin{equation}\label{eq:D}
    \sum_{i\in O} d_{ij} = W_j,\quad j \in D.
\end{equation}
The key observation is that \eqref{eq:gasnikova} can be considered as Lagrange multipliers principle for constraint entropy-linear programming problem 
\begin{equation}\label{eq:wilson}
\min_{d\ge 0:~d\text{ satisfies } \eqref{eq:O},\eqref{eq:D}}   \sum_{w\in OD} T_w d_w + \gamma\sum_{w\in OD} d_w\ln d_w,
  \end{equation}
where $\lambda^O_{i}$ and $\lambda^D_j$ is Lagrange multipliers for \eqref{eq:O} and \eqref{eq:D} correspondingly. The last model is called Wilson's entropy origin-destination matrix model \cite{wilson2013entropy,gasnikov2013vvedenie}.

The result of Theorem~\ref{Th:main} can be applied to this model due to the mentioned above reduction.

\section{Two-stages traffic assignment model}
\label{sec:two}
From the Section~\ref{sec:beckmann} we may know that Beckmann's model requires origin-destination matrix as an input $\{d_{w}\}_{w\in OD}$. So Beckmann's model allows to calculate $t(d)$. At the same time from the Section~\ref{sec:OD} we may know that Wilson's entropy origin-destination model requires cost matrix $\{T_w\}_{w\in OD}$ as an input, where $T_w:=T_w(t) = \min_{p\in P_w} T_p(t)$. So Wilson's model allows to calculate $d\left(T(t)\right)$. The solution of the system $d = d\left(T\left(t(d)\right)\right)$ is called two-stage traffic assignment model \cite{de2011modelling}. Following \cite{gasnikov2014three,gasnikov2020traffic} we can reduce this problem to the following one (see \eqref{DualBeckmann} and \eqref{eq:wilson})
  \begin{equation}\label{eq:TS}
       \min_{d\ge 0:~d\text{ satisfies } \eqref{eq:O},\eqref{eq:D}}\left\{  \max_{t \ge \bar{t}}\left\{\sum_{w \in OD} d_w T_w(t) - \sum_{e \in E} \sigma_e^*(t_e)\right\} + \gamma\sum_{w\in OD} d_w\ln d_w \right\}.
  \end{equation}
  The problem \eqref{eq:TS} can be rewritten as a convex-concave (if $\tau_{e}'(t_e) \ge 0$) saddle-point problem (SPP)
    \begin{equation}\label{eq:SPP}
       \min_{d\ge 0:~d\text{ satisfies } \eqref{eq:O},\eqref{eq:D}}\max_{t \ge \bar{t}}\left\{\sum_{w \in OD} d_w T_w(t) - \sum_{e \in E} \sigma_e^*(t_e) + \gamma\sum_{w\in OD} d_w\ln d_w\right\}.
  \end{equation}
 This SPP can be efficiently solved numerically \cite{gasnikov2015universal,gasnikov2020traffic}.
 
 Note that if we consider best-response dynamics from Section~\ref{sec:beckmann} with the parameter $\lambda: = \lambda_{\text{Beck}}$ and logit dynamic with the parameter  $\lambda: = \lambda_{\text{Wil}}$ for origin-destination matrix estimation and assume that $\lambda_{\text{Beck}}\gg \lambda_{\text{Wil}}$ than such a dynamic will converge to the stationary (invariant) measure that is concentrated around the solution of SPP problem \eqref{eq:SPP}. This result can be derived from the more general result related with hierarchical congested population games \cite{dvurechensky2016primal}.
 
\section{Numerical experiments}
The main result of the paper is Theorem~\ref{Th:main}. The main new result of this theorem is a statement that mixing time of the considered markovian logit-choice and best-response dynamics $T_{mix}$ is approximately $c_1 + c_2\cdot\ln N$, where $N$ is a number of agents.

\begin{figure}[H]
    \centering
    \centering
    \includegraphics[scale=.5]{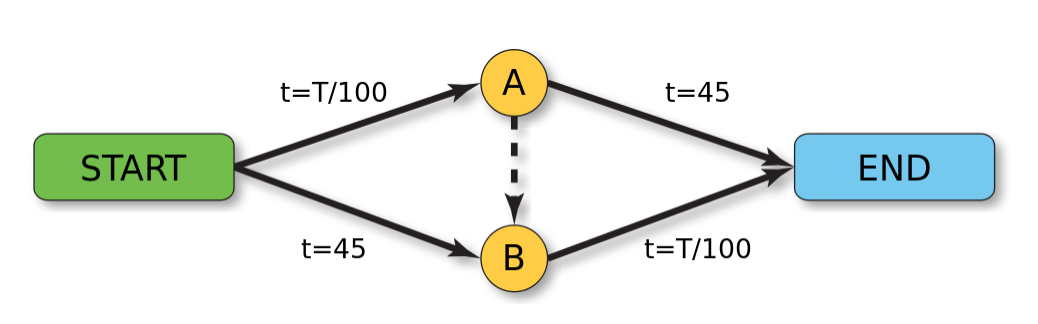}
    \caption{Braess's paradox graph}\label{fig:BP}
\end{figure}

We consider Braess's paradox example \cite{frank1981braess}, see figure~\ref{fig:BP}. This picture is taken from Wikipedia.  Here Origin is START and Destination is END. We have one OD-pair and put $d = N$ -- the number of agents. The <<paradox>>  arises when $N = 4000$. In this case when there is no road from A to B  we have two routes (START, A, END) and (START, B, END) with 2000 agents at each route. So the equilibrium time costs at each route will be 65. When the road AB is present (this road has time costs 0) all agents will use the route (START, A, B, END) and this equilibrium has time costs 80. That is paradoxically larger than it was without road AB. 

In series of experiments (see figures~\ref{fig:0.1},~\ref{fig:0.01},~\ref{fig:BR}) the dependence of mixing time $T_{mix}$ from $\ln N$ was investigated. Details see in

\url{https://github.com/ZVlaDreamer/transport_flows_project}.

 \begin{figure}[H]
    \centering
    \centering
    \includegraphics[scale=.5]{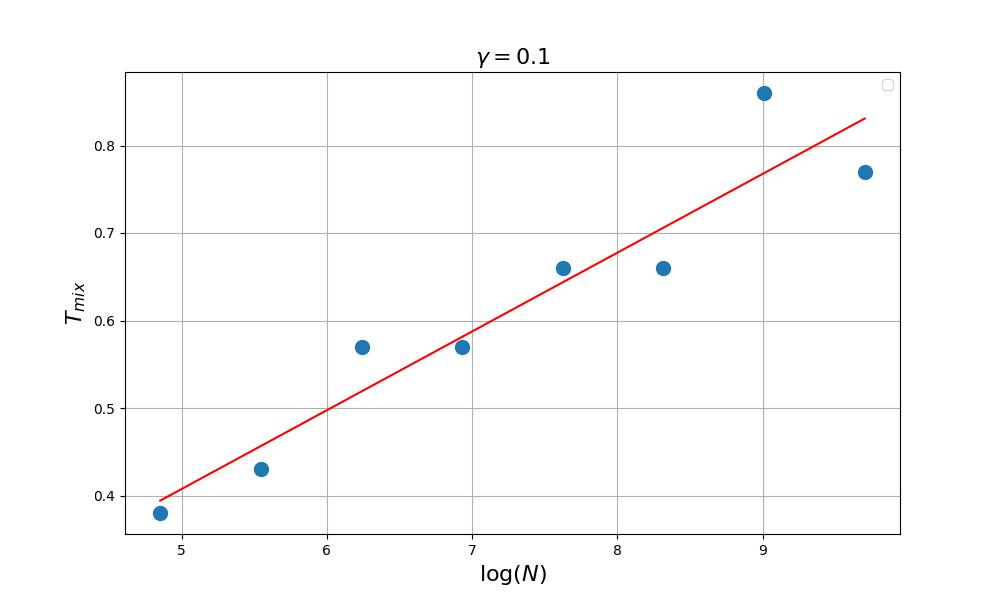}
    \caption{Logit-choice dynamic $\gamma = 0.1$}\label{fig:0.1}
\end{figure}

 \begin{figure}[H]
    \centering
    \centering
    \includegraphics[scale=.5]{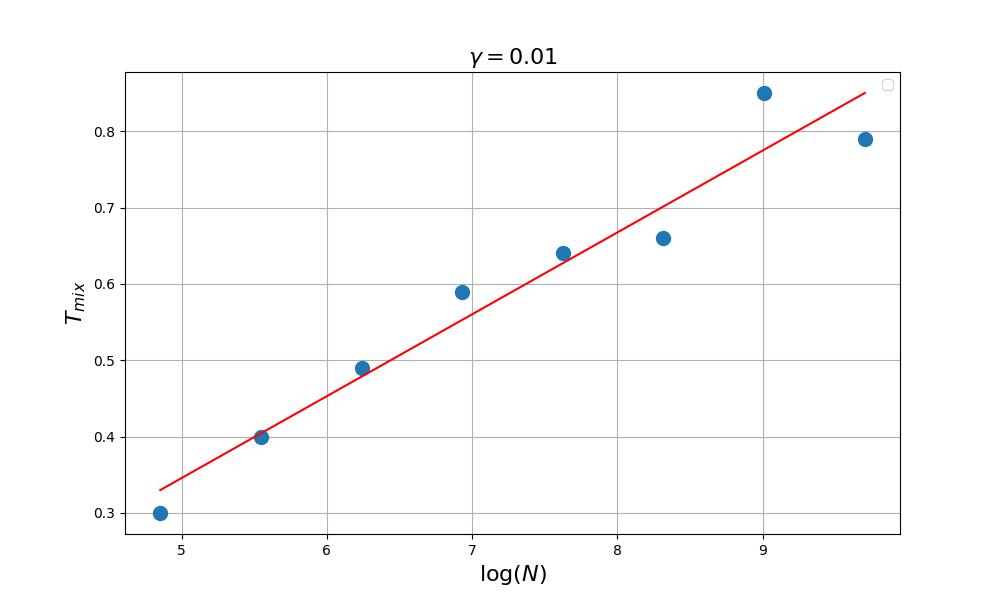}
    \caption{Logit-choice dynamic $\gamma = 0.01$}\label{fig:0.01}
\end{figure}

 \begin{figure}[H]
    \centering
    \centering
    \includegraphics[scale=.5]{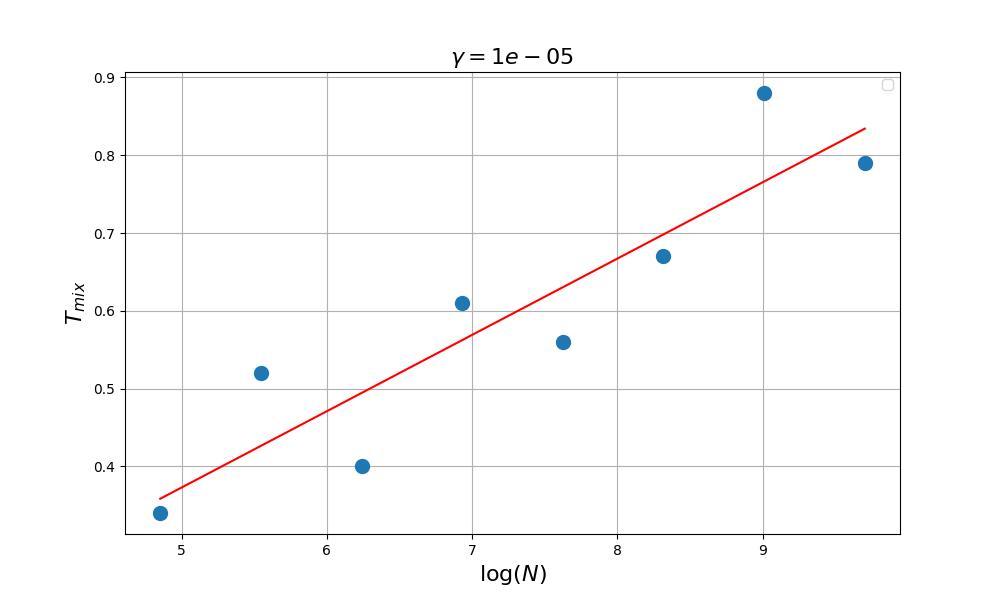}
    \caption{Best response dynamic (as a limit of logit-choice dynamics $\gamma \to 0+$)}\label{fig:BR}
\end{figure}

Numerical experiments confirm Theorem~\ref{Th:main}. Note that in \cite{gasnikov2013vvedenie} it was described a real-life experiment oraganized with MIPT students in Experimental Economics Lab. The students were agents and play in repeated Braess's paradox game. The result of experiments from \cite{gasnikov2013vvedenie} is also well agreed with the described above numerical experiments.

\section{Conclusion}
In this paper we investigate logit-choice and best-response population markovian dynamics converges to equilibrium in corresponding traffic assignment model. We show that mixing time is proportional to logarithm from the number of agent. Numerical experiments confirm  that the dependence is probably unimprovable. We also consider two-stage traffic assignment model and describe how to interpret equilibrium for this model in an evolutionary manner.

We dedicate this paper to our colleague prof. Vadim Alexandrovich Malyshev (April 13, 1938 — September 30, 2022). We express our gratitude to Leonid Erlygin (MIPT) and Vladimir Zholobov (MIPT) who conducted numerical experiments.

The work of E. Gasnikova was supported by the Ministry of Science and Higher Education of the Russian Federation (Goszadaniye) 075-00337-20-03, project No. 0714-2020-0005. The work of A. Gasnikov was supported by the strategic academic leadership program <<Priority 2030>> (Agreement  075-02-2021-1316 30.09.2021).

\bibliographystyle{unsrt}
\bibliography{Lib_new}
\end{document}